\newtheorem{theorem}{Theorem}[section]
\newtheorem{lemma}[theorem]{Lemma}
\newtheorem{proposition}[theorem]{Proposition}
\theoremstyle{definition}
\theoremstyle{remark}
\numberwithin{equation}{section}
\renewcommand{\j}{\jmath}
\newcommand{\R}{\mathbb{R}}
\newcommand{\N}{\mathbb{N}}
\renewcommand{\ker}{\mathrm{Ker}}
\renewcommand{\mod}{/}
\begin{document}

\title[Generalized measure algebras]{On natural density, orthomodular lattices, measure algebras and non-distributive $L^p$ spaces}

\author{Jarno Talponen}
\address{University of Eastern Finland, Department of Physics and Mathematics, Box 111, FI-80101 Joensuu, Finland}
\email{talponen@iki.fi}

\keywords{Orthomodular lattice, Boolean algebra, measure algebra, group-valued submeasure, natural density, density set, content extension, Banach space, function space, $L^p$ space}
\subjclass[2010]{06C15, 28A60, 11B05, 46E99}
\date{\today}

\begin{abstract}
In this note we first show, roughly speaking, that if $\mathcal{B}$ is a Boolean algebra included in the natural way in the collection 
$\mathcal{D}/_\sim$ of all equivalence classes of natural density sets of the natural numbers, modulo null density, then $\mathcal{B}$ extends to a $\sigma$-algebra $\Sigma \subset \mathcal{D}/_\sim$ and the natural density is $\sigma$-additive on $\Sigma$. We prove the main tool employed in the argument in a more general setting, involving a kind of quantum state function, more precisely,  a group-valued submeasure on an orthomodular lattice. At the end we discuss the construction of `non-distributive $L^p$ spaces' by means of submeasures on 
lattices.
\end{abstract}

\maketitle
\newcommand{\M}{\mathbb{M}}
\section{Introduction}
This article deals with abstract versions of the measure algebra and natural density notions.
The considerations here are much in the spirit of Galois connections.

Let us denote by $\mathcal{D}$ the collection of all \emph{density sets}, i.e. sets $A \subset \N$ such that the \emph{natural density}
\[d(A):=\lim_{n\to\infty} \frac{|A\cap \{1,\ldots ,n\}|}{n}\]
exists. We denote by $\mathcal{N} \subset \mathcal{D}$ the collection of all null density sets, i.e. sets $A$ with $d(A)=0$.
We denote by $\sim$ the equivalence relation on $\mathcal{D}$ given by $K\sim M$ if the symmetric difference of $K \vartriangle M$ is in $\mathcal{N}$. Consider the set $\mathcal{D} /_\sim$ of equivalence classes $[x]$ 
modulo null density. It has the natural partial order given by $[x] \preceq_{\mathcal{N}} [y]$ if there is $N \in \mathcal{N}$ such that $x \subset y \cup N$. 

The natural density is of course an important notion 
in number theory. For instance, recall Szemer\'edi's theorem in \cite{Sz} which states that every positive 
(i.e. non-null) density set contains a $k$-arithmetic progression for every $k \in\N$.

Regarding the natural density as a content in the sense of measure theory is a classical theme, 
see e.g. \cite{Buck,Buck2}. The following theorem is the main result in this paper:
\begin{theorem}\label{thm: 1}
Let $\mathcal{F}\subset \mathcal{D}$ be a family closed under finite intersections. 
Then there is a $\sigma$-algebra $\Sigma$ order-isomorphically included in $\mathcal{D} /_\sim$ 
such that $\mathcal{F} /_\sim \subset \Sigma$ and $\hat{d} \colon \Sigma \to [0,1]$, 
$\hat{d}(K /_\sim )=d(K)$, is $\sigma$-additive. 

Morerover, if $\mathcal{F}/_\sim$ is countable and the corresponding $\sigma$-generated measure algebra $(\Sigma,   \hat{d})$ is atomless, then it is in fact isomorphic to the measure algebra on the unit interval.
\end{theorem}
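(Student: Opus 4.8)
The plan is to build the $\sigma$-algebra $\Sigma$ by a transfinite construction (or a Zorn-type maximality argument) inside $\mathcal{D}/_\sim$, successively closing $\mathcal{F}/_\sim$ under complements and countable suprema while keeping everything inside $\mathcal{D}/_\sim$ and keeping $\hat d$ additive. The crux is the following: given an increasing sequence $[A_1]\preceq_{\mathcal N}[A_2]\preceq_{\mathcal N}\cdots$ of elements already placed in our partial algebra, we must produce a single density set $B$ (up to $\sim$) that serves as the supremum, with $d(B)=\lim_n d(A_n)$. Because the $A_n$ need not be nested as actual subsets, one first replaces them by genuinely increasing representatives $A_1'\subset A_2'\subset\cdots$ with $A_n'\sim A_n$ (adjusting by null-density sets); then one diagonalizes: choose $n_k\uparrow\infty$ so that $\frac{|A_{n_k}'\cap\{1,\dots,n\}|}{n}$ is within $2^{-k}$ of $d(A_{n_k}')$ for all $n\ge N_k$, and set $B=\bigcup_k \bigl(A_{n_k}'\cap (N_k,N_{k+1}]\bigr)$ or a similar interval-splicing. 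This $B$ lies in $\mathcal D$ with $d(B)=\lim_n d(A_n')=\sup_n \hat d([A_n])$, and $[B]$ is an upper bound; a further argument shows it is the least upper bound modulo $\mathcal N$. This is precisely where I expect the main technical tool of the paper — the earlier result on group-valued submeasures on orthomodular lattices — to be invoked, so I would phrase the density-set splicing as an instance of that extension theorem rather than redo it by hand.

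Once $\Sigma$ exists with $\hat d$ $\sigma$-additive, finite additivity of $d$ on $\mathcal D$ (classical, \cite{Buck}) together with the splicing argument upgrades to genuine $\sigma$-additivity on $\Sigma$: countable disjoint unions of density sets, after replacing by disjoint representatives, are realized inside $\mathcal D$ with the densities adding up, by the same interval-splicing idea. One must also check $\hat d$ is well defined (invariance under $\sim$, which is immediate since symmetric differences lie in $\mathcal N$) and that the inclusion $\Sigma\hookrightarrow\mathcal D/_\sim$ is an order-isomorphism onto its image — order-preserving is clear from the definition of $\preceq_{\mathcal N}$, and injectivity is forced because $\hat d$ separates distinct classes of a measure algebra together with the order structure.

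For the "moreover" clause: if $\mathcal F/_\sim$ is countable then the $\sigma$-algebra $\Sigma$ it $\sigma$-generates is separable as a measure algebra (countably generated). A countably generated measure algebra that is atomless is, by the classical Carathéodory/Maharam isomorphism theorem, measure-isomorphic to the measure algebra of Lebesgue measure on $[0,1]$; I would cite this standard fact (e.g.\ Halmos, \emph{Measure Theory}, or Fremlin, \emph{Measure Theory} Vol.\ 3, \S 331) rather than prove it. The only things to verify are that $(\Sigma,\hat d)$ is a genuine probability measure algebra — i.e.\ that $\hat d$ is strictly positive on nonzero elements and $\sigma$-additive, both of which we have — and that it is Dedekind complete, which follows once it is a $\sigma$-complete Boolean algebra carrying a strictly positive countably additive measure (such algebras are automatically complete).

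The step I expect to be the real obstacle is controlling the interval-splicing under \emph{countably many simultaneous} suprema and complements at once: a single diagonal argument handles one increasing sequence, but iterating through a transfinite chain of extensions risks the representatives drifting so that a later supremum no longer lands in $\mathcal D$. The resolution should be to organize the construction so that at each countable stage one fixes, once and for all, a countable dense (in measure) subalgebra and a single family of representatives that is coherent for all the countably many limiting operations needed at that stage — effectively building the whole countably generated $\Sigma$ in one transfinite-free step via the submeasure extension theorem from the body of the paper, so that no drifting occurs. Making that coherence precise, and checking it survives the passage to the generated $\sigma$-algebra, is where the argument needs the most care.
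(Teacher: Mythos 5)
Your proposal has the right two endpoints: the supremum of an increasing sequence in $\mathcal{D}/_\sim$ is indeed produced by an interval-splicing/diagonalization argument (this is exactly the content of the paper's lemma on group-valued submeasures, specialized to $d$), and the ``moreover'' clause is indeed the classical fact that a countably generated atomless probability measure algebra is isomorphic to the Lebesgue measure algebra on $[0,1]$. The gap is in the middle: your plan to build $\Sigma$ by transfinitely ``closing $\mathcal{F}/_\sim$ under complements and countable suprema'' never explains why the resulting family is closed under \emph{finite intersections} (equivalently, under unions of non-disjoint sets). This is not automatic, because $\mathcal{D}$ is not a lattice: the intersection or union of two density sets need not be a density set, so you cannot simply adjoin $A\cap B$ at each stage and stay inside $\mathcal{D}/_\sim$, nor can you manufacture $A\cap B$ from complements and the unions you do control. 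The hypothesis that $\mathcal{F}$ is $\cap$-closed is a $\pi$-system hypothesis, and the paper exploits it via the Dynkin--Sierpi\'nski $\pi$--$\lambda$ argument: one defines a $d$-system (containing $[\N]$, closed under relative complements, disjoint unions, and increasing countable suprema --- all operations under which $\mathcal{D}/_\sim$ itself is closed, the last by the splicing lemma), takes $\Delta$ to be the minimal $d$-system containing $\mathcal{F}/_\sim$, and then runs the standard two-step argument with $\mathcal{A}_1$ and $\mathcal{A}_2$ to conclude that $\Delta$ is $\cap$-closed, hence a Boolean algebra, hence a $\sigma$-algebra.

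Your own worry about ``representatives drifting'' along a transfinite chain of extensions is a symptom of the same structural problem; the minimal-$d$-system formulation dissolves it, because closure under increasing suprema is a property of the ambient poset $\mathcal{D}/_\sim$ verified once and for all, not something that must be maintained along a recursion. Your proposed resolution (``fix a coherent family of representatives in one transfinite-free step via the submeasure extension theorem'') gestures at this but is not a proof as written. Replace the transfinite closure by the $\pi$--$\lambda$ argument and the remainder of your outline --- well-definedness of $\hat d$, $\sigma$-additivity via the splicing lemma, and the Maharam-type classification for the countably generated atomless case --- goes through essentially as in the paper.
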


In the first part of the statement $\Sigma$ can be viewed essentially as a family of density sets closed under finite intersections and therefore the conditions in the first part of the theorem are in fact equivalent.
What is highly counter-intuitive about the latter part of the result is that the algebra is modeled on $\N$ 
which clearly does not admit an atomless $\sigma$-additive probability measure. 

In the classical papers on this topic \cite{Buck,Buck2} equivalence classes were considered essentially modulo finite subsets of the natural numbers, instead of taking equivalence classes modulo null-density. Presumably,
the embeddings of Boolean algebras in $\mathcal{D} / \mathrm{Fin}$ behave differently compared 
to such embeddings in $\mathcal{D} /_\sim$. 
Also, the question of embedding the measure algebra $\M$ into $\mathcal{P}(\N)/\mathrm{Fin}$ is very 
delicate, see e.g. \cite{blass,dow,DiNasso}.

In fact we will prove the main tool applied in the above a result in a more general form. In this general version
we will replace Boolean algebras with orthomodular lattices and the measures are replaced by kind of 
quantum probability measures or group-valued submeasures, cf. \cite{l}. Ortomodular lattices are closely related to the formulation of quantum logic in physics. In this connection the above mentioned measure is 
sometimes called the state function. This mapping also appears to have a connection to graded semi-modular lattices. From the functional analysis point of view the orthomodular lattices model a system of closed subspaces of a Hilbert space with natural interpretations involving the complemented lattice operations.  

The end of the paper is devoted to studying $L^p$ spaces over `non-distributive measure algebras', or rather $L^p (\mathcal{L},\varphi)$ where 
$\mathcal{L}$ is a bounded lattice and $\varphi$ is an order-preserving subadditive mapping $\varphi \colon \mathcal{L} \to [0,1]$ with $\varphi({\bf 0})=0$ 
and $\varphi({\bf 1})=1$. The non-commutative $L^p$ spaces are under active investigation (cf. \cite{nonComm}) and the above generalization of the classical $L^p$ spaces goes to another direction. We will adopt 
an approach which somewhat resembles the construction of some Banach tensor products, 
cf. \cite{fremlin_mathann}, \cite{Wittstock}.

\section{Group-valued submeasures on lattices and extensions of the natural density}

We refer to the monographs in the references for suitable background information. 
As mentioned above, our aim is to generalize the main tool applied in the proof of Theorem \ref{thm: 1} simultaneously to several directions. We list the assumptions and conventions imposed in this section:
\begin{itemize}
\item In what follows $\mathcal{L}$ is a bounded countably complete orthomodular lattice. 
The minimal and maximal elements are denoted by $\bf 0$ and $\bf 1$, respectively. 
\item Let $G$ be a partially ordered locally compact topological group. Its neutral element is denoted by $e$.
We impose the following conditions:
\begin{enumerate}
\item[(1)]{If $x\leq y$ then $zx\leq zy$ and $xz\leq yz$,}
\item[(2)]{If $x\leq y$ then $y^{-1}\leq x^{-1}$.}
\item[(3)]{The topology is stronger than the order topology.}
\item[(4)]{The topology of $G$ has countable character.} 
\end{enumerate}
\item Let $I$ be an infinite index set. 
\item Let $\mathcal{F}$ be a filter on $I$ which is countably incomplete in the sense that there is a 
sequence $(F_n ) \subset \mathcal{F}$ with $\bigcap_n F_n = \emptyset$. 
\item For each $i\in I$ we let 
$m_i \colon \mathcal{L} \to G$ be a mapping satisfying the following properties:
\begin{enumerate}
\item[(i)]{$m_i(0)=e$.}
\item[(ii)]{$m_i$ is order-preserving.}
\item[(iii)]{$m_i (x\vee y)\leq m_i (x) m_i (y)$ for $x,y\in \mathcal{L}$.}
\item[(iv)]{If $x \leq y$ then $m_i (y)=m_i (x) m_i (y\wedge x^\bot )$.}
\end{enumerate}
\item We assume $\Lambda \subset \mathcal{L}$ is the maximal subset such that the mapping 
$m\colon \Lambda\to G$ given by
\[m(x)=\lim_{i,\mathcal{F}}m_i (x),\quad x\in \Lambda,\]
is defined.
\item This yields a natural ideal $\mathcal{N}=\{x\in \mathcal{L}:\ m(x)=e\} \subset \mathcal{L}$.
\item We define a preorder on $\mathcal{L}$ by $x \sqsubseteq_\mathcal{N} y$ if there is $N \in \mathcal{N}$ such that $x \leq y \vee N$.
\item We define an equivalence relation on $\mathcal{L}$ by $x \sim y$ if $x \sqsubseteq_\mathcal{N} y$ and $y \sqsubseteq_\mathcal{N} x$.
\item Then $\sim$ is  a $\vee$-semilattice congruence and we have a natural quotient mapping $\mathcal{L} \to \mathcal{L}/ \mathcal{N}$, $x \mapsto [x]$, which is a 
$\vee$-semilattice epimorphism.
\item We define $\hat{m}\colon \mathcal{L}\mod \mathcal{N}\to G$ by $\hat{m}([x])=m(x)$.
(This is well-defined by the assumptions on $m$.)
\item We assume that for each $m_i$ there is a \emph{support} $s_i \in \mathcal{L}$ in the sense that 
\[m_i (x)=m_i(x\wedge s_i )\quad \forall x\in \mathcal{L}.\]
\item We assume that if $\Gamma \subset I$ is such that $I \setminus \Gamma \in \mathcal{F}$ then $\bigvee_{i\in\Gamma} s_i \in \mathcal{L}$ exists and is included in $\mathcal{N}$. 
\item We assume that if $x,y\in \Lambda$, $x \sqsubseteq_\mathcal{N} y$, then $x \wedge y \in \Lambda$ and $[x\wedge y ]=[x]$.
\item If $x,y\in \Lambda$, $x \sqsubseteq_\mathcal{N} y$, $m(x)=m(y)$, then $[x]=[y]$.
\end{itemize}

Admittedly, this list of assumptions long and appears technical. It is not very restrictive, for example 
the density function $d$ fits easily to this framework. The point here is stretching the generality in which 
our main tool holds to the limit, as there appears to be applications for such structures,
see \cite{beran, l, DiNasso}. 

The proof of the following auxiliary fact is subsequently applied in the argument of the main result.

\begin{lemma}\label{lm: main}
With the above notations the mapping $\hat{m}$ is countably additive in the following sense: 
Suppose that $([x_n ]) \subset  \Lambda\mod \mathcal{N}$ is an increasing sequence such that 
$\bigvee_n \hat{m}([x_n ])$ exists. Then $\bigvee_n  [x_n ] \in  \Lambda\mod \mathcal{N}$ exists
and $\hat{m}(\bigvee_n  [x_n]  )=\bigvee_n \hat{m}([x_n ])$. 
\end{lemma}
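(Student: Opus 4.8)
The strategy is a diagonal argument over the countably incomplete filter $\mathcal{F}$, in the spirit of the standard construction of countably incomplete ultraproducts. Since $\mathcal{F}$ is countably incomplete, fix a decreasing sequence $(F_n)\subset\mathcal{F}$ with $\bigcap_n F_n=\emptyset$; we may assume $F_0=I$. For each $n$ choose a representative $x_n\in\Lambda$ of $[x_n]$, with $x_n$ increasing in $\mathcal{L}$ (we can arrange monotonicity of representatives because $\sim$ is a $\vee$-semilattice congruence, replacing $x_n$ by $x_0\vee\cdots\vee x_n$). Write $g:=\bigvee_n\hat m([x_n])=\bigvee_n m(x_n)$, which exists by hypothesis. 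For each index $i\in I$ let $n(i)$ be the largest $n$ with $i\in F_n$ (set $n(i)=0$ if $i\notin F_1$); since $\bigcap F_n=\emptyset$ this is well-defined and finite, and $n(i)\to\infty$ along $\mathcal{F}$ in the sense that $\{i:n(i)\geq n\}\supset F_n\in\mathcal{F}$. Now define the candidate limit element $z:=\bigvee_{i\in I} (x_{n(i)}\wedge s_i)$, and check it lies in $\mathcal{L}$: here I would use the support hypotheses — the elements $x_{n(i)}\wedge s_i$ are each below $s_i$, and the assumption on joins of supports over sets whose complement lies in $\mathcal{F}$ should give countable completeness enough to form this join (or, more carefully, decompose $z$ as a countable join $\bigvee_n \big(x_n\wedge \bigvee_{n(i)=n} s_i\big)$ and invoke countable completeness of $\mathcal{L}$ directly).

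The heart of the argument is then to show $z\in\Lambda$ with $m(z)=g$, i.e. $\lim_{i,\mathcal{F}} m_i(z)=g$. For this I would show, for each fixed $n$, that $x_n\sqsubseteq_\mathcal{N} z$ and conversely that $z$ is "caught between" the $x_n$'s up to $\mathcal{N}$. One inclusion: for $i\in F_n$ we have $n(i)\geq n$, so $x_n\wedge s_i\leq x_{n(i)}\wedge s_i\leq z$; since $m_i(x_n)=m_i(x_n\wedge s_i)$ by the support property, and the complement $I\setminus F_n$ carries a support-join inside $\mathcal{N}$, one deduces $x_n\sqsubseteq_\mathcal{N} z$, hence $\hat m([x_n])=m(x_n)\leq \hat m([x_n\wedge z])\leq \hat m([z])$ after using the two final congruence hypotheses and monotonicity of $\hat m$; taking the supremum over $n$ gives $g\leq m(z)$ (using property (2), compatibility of $\leq$ with the order topology, and that $m(z)$ is an upper bound). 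For the reverse inequality $m(z)\leq g$: estimate $m_i(z)$. By construction $z\wedge s_i = x_{n(i)}\wedge s_i\vee(\text{stuff supported off }i)$, so $m_i(z)=m_i(z\wedge s_i)=m_i(x_{n(i)})\leq g$ for every $i$; passing to the limit along $\mathcal{F}$ using that the topology is stronger than the order topology (condition (3)) and that the order intervals are closed, one gets $m(z)\leq g$. Combining, $m(z)=g$, so in particular the limit defining $m$ exists at $z$ and $z\in\Lambda$.

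It remains to see that $[z]=\bigvee_n[x_n]$ in $\Lambda\mod\mathcal{N}$. We already have $[x_n]\preceq_\mathcal{N}[z]$ for all $n$, so $[z]$ is an upper bound. For minimality, suppose $[y]$ with $y\in\Lambda$ satisfies $[x_n]\preceq_\mathcal{N}[y]$ for all $n$; I must show $[z]\preceq_\mathcal{N}[y]$, i.e. $z\sqsubseteq_\mathcal{N} y$. Here one writes $z\wedge y'$ for a suitable representative and uses the hypothesis "if $x,y\in\Lambda$, $x\sqsubseteq_\mathcal{N} y$, $m(x)=m(y)$, then $[x]=[y]$": since $m(x_n\vee y)=?$ — more precisely, from $x_n\leq y\vee N_n$ one builds $N:=\bigvee_n N_n$, which lies in $\mathcal{N}$ provided the join exists (again by countable completeness plus the ideal property of $\mathcal{N}$, which one checks is a $\sigma$-ideal under these hypotheses), and then $z\leq y\vee N$ follows by taking joins in the defining formula for $z$ against $x_n\leq y\vee N$. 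Thus $[z]\preceq_\mathcal{N}[y]$, giving minimality and hence $[z]=\bigvee_n[x_n]$, and $\hat m(\bigvee_n[x_n])=m(z)=g=\bigvee_n\hat m([x_n])$, as required.

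**Main obstacle.** The delicate point I expect to fight with is the interchange of the filter limit $\lim_{i,\mathcal{F}}$ with the lattice join $\bigvee_{i}$: establishing $m(z)=g$ genuinely requires that the "error terms" $x_{n(i)}\wedge s_i$ for small $n(i)$ become negligible along $\mathcal{F}$, and this is exactly where countable incompleteness of $\mathcal{F}$ (the sequence $(F_n)$) must be used in tandem with the hypothesis that support-joins over $\mathcal{F}$-large-complement sets land in $\mathcal{N}$. Getting both inequalities $g\leq m(z)$ and $m(z)\leq g$ to go through — with the topological conditions (2)–(4) on $G$ ensuring that order intervals are closed and that the filter limit respects $\leq$ — is the technical crux; everything else is bookkeeping with the congruence $\sim$ and the two final structural assumptions in the list.
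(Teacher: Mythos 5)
Your overall architecture (diagonalize along the countably incomplete filter, build one candidate element, then verify it is the least upper bound) is the same as the paper's, but the candidate you construct is the wrong one, and the step you yourself flag as the ``technical crux'' is exactly where it breaks. The fatal claim is $m_i(z)=m_i(z\wedge s_i)=m_i(x_{n(i)})$ for $z=\bigvee_{i}(x_{n(i)}\wedge s_i)$. The terms $x_{n(j)}\wedge s_j$ with $j\neq i$ are \emph{not} ``supported off $i$'': nothing in the hypotheses makes the supports pairwise disjoint, and in the motivating example $m_i(A)=|A\cap\{1,\dots,i\}|/i$ they are nested, $s_j=\{1,\dots,j\}$, so $z\wedge s_i$ absorbs contributions from $x_{n(j)}$ with $n(j)$ arbitrarily large. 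In fact, in the natural density model your $z$ is exactly the plain union $\bigcup_n x_n$: given $k\in x_m$, pick $i\in F_m$ with $i\geq k$ (possible since $F_m$ is cofinite); then $n(i)\geq m$, so $k\in x_{n(i)}\wedge s_i\leq z$. The plain union is precisely what the lemma must avoid. Take $x_1=2\N$ and $x_{n+1}=x_n\cup B_n$ with $B_n$ finite blocks exhausting the odd numbers: every $x_n$ has density $\tfrac12$, every $[x_n]=[2\N]$, so $\bigvee_n[x_n]=[2\N]$ and $\bigvee_n\hat m([x_n])=\tfrac12$, yet your $z=\N$ has $m(z)=1$. So the inequality $m(z)\leq g$ is false for your candidate, and no appeal to conditions (2)--(4) on the topology of $G$ can repair it; the first half of your argument ($g\leq m(z)$, $[x_n]\preceq_{\mathcal N}[z]$) only ever shows that $z$ is \emph{an} upper bound.

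The correct construction truncates from the opposite side. For each $n$ one isolates the $\mathcal{F}$-small set of ``bad'' indices, forms $z_n=\bigvee_{i\in I\setminus\Gamma_n}s_i\in\mathcal{N}$, and uses the orthomodular law to replace $x_{n+1}$ by its tail $z_n^\bot\wedge(x_{n+1}\vee z_n)$, which lies in the same $\sim$-class (since $z_n$ is null) but is invisible to the $m_i$ that would otherwise over-count; the candidate is the countable join $w$ of these tails. Your final paragraph --- testing $w$ against a competing upper bound $[y]$ by forming $\bigvee_n(y\wedge w_n)$ and invoking the last two structural hypotheses --- is essentially the paper's minimality argument and would go through once the candidate is replaced.
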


\begin{proof}
Let $G\supset U_1 \supset U_2 \supset \ldots \supset U_n \supset \ldots$, $n\in\N$, be a countable neighborhood basis of $e$.  
Let $(F_n ) \subset \mathcal{F}$ be a decreasing sequence such that $\bigcap_{n\in \N} F_n = \emptyset$. Let 
$\Gamma_1 \subset F_1$ be the subset of all indices $i$ such that 
\[m_i (x_1 ) \in m(x_1 ) U_1 .\]
Next, we let $\Gamma_2 \subset \Gamma_1 \cap F_2$ be the subset of all indices $i$ such that 
\[m_i (x_2 ) \in m(x_2 ) U_2 .\]
We proceed recursively in this fashion: for each $n+1\in\N$ we let 
$\Gamma_{n+1} \subset \Gamma_n \cap F_n$ be the subset of all indices $i$ such that 
\[m_i (x_{n+1} ) \in m (x_{n+1})U_{n+1} .\]
Note that by the construction of $m$ we have that $\Gamma_n \in \mathcal{F}$ for each 
$n\in\N$. 

Let $s_i $ be the supports of the mappings $m_i$. We put 
\[z_n := \bigvee_{i \in I \setminus \Gamma_n} s_i ,\quad n\in\N .\]
Note that the orthomodular law yields that 
$x\vee z_n = z_n \vee (z_{n}^\bot \wedge (x \vee z_n ))$ for each $x \in \mathcal{L}$. 
Since $m(z_n ) =e$ by the assumptions, we may modify recursively the selection of the sets $\Gamma_{n}$
to obtain a decreasing sequence $(\Gamma_{n}' )$ with $\Gamma_n \supset  \Gamma_{n}' \in \mathcal{F}$
in such a way that 
\[m_i (z_{n}^\bot \wedge (x_{n+1}\vee z_n )) \in m_i ( x_{n+1})U_{n+1} \]
and 
\[m_i (x_{n+1} ) \in m (x_{n+1})U_{n+1}\]
for all $i \in \Gamma_{n+1}'$, $n \in\N$.

Put 
\[w_n = \bigvee_{1\leq j \leq n} z_{j}^\bot \wedge (x_{j+1} \vee z_j ),\quad n\in\N.\]
By using the countable completeness of $\mathcal{L}$ we may put
\[w = \bigvee_n w_n .\]

Note that if we choose a sequence $(i_n )$ with $i_n \in \Gamma_n$ then 
$m(x_{n})^{-1} m_{i_n} (w) \to e$ as $n\to \infty$ and that  
$M=\bigvee_n m(x_{n})$ exists by assumption. Thus by the assumptions involving the topology of $G$ 
we obtain that $(m_{i_n} (x_{n})) \subset G$ contains a subsequence converging to $M$.
From the arbitrary nature of the selection of $(i_n )$ we conclude that $\lim_{i,\mathcal{F}} m_i (w) = M$.
In particular, $w \in \Lambda$. 

Note that 
\[[x_{n+1} ] = [z_n \vee (z_{n}^\bot \wedge (z_n \vee x_{n+1}) )] ,\]
so that
\[[w]= [z_n \vee w] \geq [z_n \vee w_n ] \geq [x_n ] .\]
This means that $[w]$ is an upper bound for the sequence $([x_n ] )$.

Assume next that $[y] \in  \Lambda\mod \mathcal{N}$ is some upper bound  for the sequence $([x_n ] )$. Then according to the assumptions $y \wedge w_n $ exists because $[w_n ]=[x_n ]$ and it defines an increasing sequence. By the countable completeness of $\mathcal{L}$ we may define
$w_0 :=\bigvee_n   y \wedge w_n$. Clearly $w_0 \sqsubseteq_\mathcal{N} y$ and $w_0 \sqsubseteq_\mathcal{N} w$. Moreover, $m(w_0 )= M$, since $w_0 \leq w$ and 
$m(y\wedge w_n ) \to M$ as $n\to\infty$. It follows from the assumptions that 
$[w_0 ] = [w]$. Thus $[w]$ is the least upper bound for the sequence $([x_n ])$.  

\end{proof}

\section{The main result}
In this section we will give the proof for Theorem \ref{thm: 1}, the main result.
\subsection{Pointless $L^p$ spaces}

We will introduce subsequently a way of defining $L^p$ spaces over a general lattice. This construction can be specialized to obtain $L^p$ spaces 
over measure algebra, i.e. $L^p (\Sigma,\mu)$ (cf. \cite[Ch. 36]{fremlin}). These spaces can be built by defining a suitable norm 
for `simple functions' $\sum_k a_k \otimes M_k$ where $a_k \in \R$ and $M_k \in \Sigma$ are essentially pairwise disjoint. Then the norm is given by 
\[\left \|\sum_k a_k \otimes M_k \right\| = \left(\sum_k |a_k |^p \mu(M_k )\right)^{\frac{1}{p}}.\]
This space is then completed to get the required space $L^p (\Sigma,\mu)$.
(The case with a lattice, instead of a Boolean algebra is more complicated.) One can see that this space has a natural Banach lattice structure 
and it admits a Lebesgue integral like functional.

\subsection{The proof}
\begin{proof}[Proof of Theorem \ref{thm: 1}] 
Let us resume the notations of the theorem.
It is well-known that if $A$ is a density set and $N$ a null-density set, then $A\cup N$ and $A \setminus N$ 
are also density sets and have the same density as $A$. Therefore $\hat{d}$ in the theorem is well-defined.

We will apply the argument of the Dynkin-Sierpinski  $\pi$-$\lambda$-lemma.
We call a subset $\Delta \subset \mathcal{D}/_\sim$ a $d$-system if it satisfies the following conditions:
\begin{enumerate}
\item[(i)] $[\N] \in \Delta$;
\item[(ii)] For each $[A] , [B]\in \Delta$, $[A] \preceq_{\mathcal{N}} [B]$, we have $[B \setminus A] \in \Delta$; 
\item[(iii)] For each $[A] , [B]\in \Delta$, $A\cap B = \emptyset$, we have $[A\cup B] \in \Delta$;
\item[(iv)] If $(A_n ) \subset \Delta$ is an increasing sequence in the order inherited from 
$\mathcal{D}/_\sim$, then the least upper bound $A$ for this sequence exists in $\mathcal{D}/_\sim$
and moreover $A \in \Delta$.
\end{enumerate} 

We claim that $\mathcal{D}/_\sim$ is a $d$-system itself. Indeed, the conditions (i)-(iii) follow 
immediately from the well-known properties of the natural density. The last condition follows from the proof of 
Lemma \ref{lm: main}.

Let $\Delta$ be the intersection of all $d$-systems in $\mathcal{D}/_\sim$ containing 
$\mathcal{F}/_\sim$. 
Put 
\[\mathcal{A}_1 = \{[A] \in \Delta \colon [A \cap F] \in \Delta\ \forall F \in \mathcal{F}\}.\]
We claim that $\mathcal{A}_1 $ is a $d$-system. 
Indeed, since $\mathcal{F}$ is closed under finite intersections, we obtain that 
$\mathcal{F} /_\sim \subset \mathcal{A}_1$. 
Therefore, $\mathcal{A}_1$ satisfies (i). It is easy to see that $\mathcal{A}_1$ satisfies (iii)-(iv). 
Note that if $[A \cap F] \in \Delta$ for a given $A \in \Delta$ and $F \in \mathcal{F}$, then 
$[(\N \setminus A ) \cap F] = [F \setminus (A \cap F)]\in \Delta$ according to (ii).
Thus, by using (iii) we obtain that $\mathcal{A}_1$ satisfies (ii).
Consequently, $\mathcal{A}_1$ is a $d$-system and it follows that $\mathcal{A}_1 = \Delta$.

Put 
\[\mathcal{A}_2 = \{[A] \in \Delta \colon [A \cap C] \in \Delta\ \forall C \in \Delta\}.\]
By studying $\mathcal{A}_1 =\Delta$ we obtain that $\mathcal{F}/_\sim \subset \mathcal{A}_2 $.
We easily see again that $\mathcal{A}_2 $ is a $d$-system. 
It follows that $\mathcal{A}_2 =\Delta$. Thus $\Delta$ satisfies that  
$[A\cap B] \in \Delta$ for any $[A],[B] \in \Delta$.

Now we may define a Boolean algebra structure on $\Delta$ as follows:
$[A] \vee [B] = [A \cup B] = [(A\setminus B) \cup (A \cap B ) \cup (B \setminus A)]$, 
$[A] \wedge [B] = [A \cap B]$ and $\neg [A]=[\N \setminus A]$. These operations 
satisfy the axioms of a Boolean algebra since the corresponding operations on the subsets of $\N$ satisfy 
them. It follows from the last condition of the $d$-system that $\Delta$ forms in fact a $\sigma$-algebra.

Now, it is a basic well-known fact that $\hat{d}$ is finitely additive on $\Delta$ in the sense that if 
$A, B \in \mathcal{D}$ are disjoint, then 
$\hat{d} ([A] \vee [B])=\hat{d} ([A \cup B])=\hat{d} ([A]) + \hat{d} ([B])$ . 
It follows from the proof of Lemma \ref{lm: main} that $\mu=\hat{d}$ is $\sigma$-additive on $\Sigma=\Delta$. 

It remains to show the latter part of the statement. Let us study the space $L^1 (\Sigma, \mu)$. 
Since any measure algebra can be represented by using the Stone space as a measure algebra of a measure space (see \cite[321J]{fremlin}), the above space is actually isometrically order-isomorphically a genuine function space $L^1 (\Omega, \Sigma, \mu)$ where $\mu$ is a probability measure. 
If $\mathcal{F} /_\sim$ is countable and 
$\Sigma$ is $\sigma$-generated by $\mathcal{F} /_\sim$, then it is clear that $L^1 (\Sigma, \mu)$ 
is separable as the simple functions generated by $\mathcal{F} /_\sim$ are dense (e.g. by the Martingale Convergence Theorem). Note that we assumed the measure algebra is atomless. Therefore  Maharam's classification of measure algebras implies that $(\Sigma, \mu)$ is isomorphic to $\mathbf{M}$, the Maharam's space corresponding to the unit interval with the completed Lebesgue measure. 
Indeed, if $(\Sigma, \mu)$ contained a part isomorphic to the measure space on $\{0,1\}^{\omega_1 }$, generated by $\aleph_1$-many i.i.d. fair Bernoulli trials (i.e. coin tosses), then $L^1 (\Sigma, \mu)$ would be non-separable.
\end{proof}

\section{Non-distributive $L^p$ spaces}
Let us consider a bounded lattice $\mathcal{L}=(\mathcal{L}, \vee,\wedge, {\bf 0}, {\bf 1} )$ (which need not be modular or countably complete). 
Let $\varphi \colon \mathcal{L} \to [0,1]$ be an order-preserving mapping such that $\varphi({\bf 0})=0$, $\varphi({\bf 1})=1$ and 
\[\varphi(A \vee B)\leq \varphi(A) + \varphi(B)\quad \forall A,B \in\mathcal{L} .\]

Consider the space $c_{00}(\mathcal{L})$ and denote its canonical Hamel basis unit vectors by 
$e_{A}$, $A\in \mathcal{L}$.
Let $\Delta \subset c_{00}(\mathcal{L})$ be the linear subspace given by 
\[\Delta = [e_{{\bf 0}}] + \mathrm{span} ((e_{A} + e_{B}) - (e_{A\vee B} + e_{A\wedge B})\colon A,B \in \mathcal{L}\}.\]

We let 
\[q\colon c_{00}(\mathcal{L}) \to c_{00}(\mathcal{L}) / \Delta\]
be the canonical quotient mapping. Write $X= c_{00}(\mathcal{L}) / \Delta$ and 
we denote by 
\[a\otimes A := q(ae_A) \in X,\quad a\in\R,\  A \in \mathcal{L}.\] 
Note that $X$ is the space of vectors of the form 
\[\sum_{i\in I} a_i \otimes A_i,\quad a_i \in \R,\ A_i \in  \mathcal{L},\ I\ \mathrm{finite}.\]
 
\begin{proposition}
Let $\mathcal{L}$ be an orthomodular lattice and $X$ be the corresponding space defined as above. Then 
for any $a \otimes A ,b\otimes B \in X$ it holds that
\[a \otimes A + b \otimes B = a \otimes (A \wedge (A\wedge B)^\bot ) + (a+b)\otimes (A\wedge B) 
+ b\otimes (B \wedge (A\wedge B)^\bot ).\] 
Any element $x \in X$ can be represented as 
\[x=\sum_i a_i \otimes A_i \]
where $A_i \wedge A_j ={\bf 0}$ for $i\neq j$.
Moreover, if $(\Omega,\Sigma,\mu)$ is a measure space and $\mathcal{L}=\Sigma$, 
then the space $\mathcal{S}$ of simple functions on $(\Omega,\Sigma,\mu)$ can be linearly identified with 
$X$ as follows:
\[\sum_i a_i 1_{A_i }\longmapsto  \sum_i a_i \otimes A_i .\]
\end{proposition}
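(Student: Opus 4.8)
The plan is to establish the three assertions in sequence, since each feeds the next. First I would verify the ``splitting identity''
\[
a \otimes A + b \otimes B = a \otimes (A \wedge (A\wedge B)^\bot ) + (a+b)\otimes (A\wedge B) + b\otimes (B \wedge (A\wedge B)^\bot ).
\]
Writing $C = A \wedge B$, $A' = A \wedge C^\bot$ and $B' = B \wedge C^\bot$, note that by the orthomodular law applied to $C \leq A$ we have $A = C \vee (A \wedge C^\bot) = C \vee A'$, and moreover $C \wedge A' = C \wedge A \wedge C^\bot = {\bf 0}$; similarly $B = C \vee B'$ with $C \wedge B' = {\bf 0}$. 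Since $\Delta$ contains the relator $(e_A + e_{C}) - (e_{A \vee C} + e_{A \wedge C})$... more usefully, $\Delta$ contains $(e_{A'} + e_{C}) - (e_{A' \vee C} + e_{A' \wedge C}) = (e_{A'} + e_{C}) - (e_{A} + e_{{\bf 0}})$, so modulo $\Delta$ we get $a \otimes A = a \otimes A' + a \otimes C - a \otimes {\bf 0} = a \otimes A' + a \otimes C$ (using also $e_{{\bf 0}} \in \Delta$, hence $a \otimes {\bf 0} = 0$ in $X$). The analogous computation gives $b \otimes B = b \otimes B' + b \otimes C$. Adding these and using additivity of $q$ in the scalar slot on a fixed basis vector, $a \otimes C + b \otimes C = (a+b) \otimes C$, yields the claimed identity.

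Second, for the disjoint-representation statement I would argue by induction on the number of summands in a representative $x = \sum_{i=1}^n a_i \otimes A_i$. The base case $n \leq 1$ is trivial. For the inductive step, apply the splitting identity to the first two terms $a_1 \otimes A_1 + a_2 \otimes A_2$ to rewrite $x$ as a sum of $n+1$ tensors, but now the three new labels $A_1 \wedge (A_1 \wedge A_2)^\bot$, $A_1 \wedge A_2$, $A_2 \wedge (A_1 \wedge A_2)^\bot$ are pairwise orthogonal (hence pairwise meet ${\bf 0}$) among themselves. One then needs to repeatedly split each new label against each of $A_3, \dots, A_n$ and against each other; the cleanest bookkeeping is to fix a finite sublattice (or just the finite set of elements obtained by closing $\{A_1, \dots, A_n\}$ under $\wedge$ and orthocomplement relative to meets) and to observe that after finitely many applications of the splitting identity every label lies in a common family of pairwise-disjoint ``atoms'' of that finite structure, with $x$ expressed as an $\R$-combination of tensors against those atoms, absorbing repeated atoms by scalar addition. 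This is the step I expect to be the main obstacle: in a non-distributive lattice the naive ``atoms of the generated subalgebra'' need not exist or need not behave well, so the induction must be phrased purely in terms of finitely many applications of the single relator, tracking that orthogonality of labels is preserved and that the procedure terminates — essentially a normal-form argument for the quotient $c_{00}(\mathcal{L})/\Delta$.

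Third, for the identification with simple functions when $\mathcal{L} = \Sigma$: define $T \colon X \to \mathcal{S}$ on generators by $a \otimes A \mapsto a 1_A$. This is well-defined because the map $c_{00}(\Sigma) \to \mathcal{S}$, $e_A \mapsto 1_A$, kills every generator of $\Delta$ — indeed $1_{A \vee B} + 1_{A \wedge B} = 1_{A \cup B} + 1_{A \cap B} = 1_A + 1_B$ pointwise, and $1_{\emptyset} = 0$ — so it factors through the quotient. Surjectivity is immediate since every simple function is a finite $\R$-combination of indicators. For injectivity, use the disjoint-representation statement just proved: write $x = \sum_i a_i \otimes A_i$ with $A_i \cap A_j = \emptyset$ for $i \neq j$; then $T(x) = \sum_i a_i 1_{A_i}$ is a simple function in canonical disjoint form, and $T(x) = 0$ forces $a_i 1_{A_i} = 0$, i.e. $a_i = 0$ whenever $A_i \neq \emptyset$, whence each term $a_i \otimes A_i$ is already $0$ in $X$ (either $a_i = 0$ or $A_i = {\bf 0}$, and $a \otimes {\bf 0} = 0$), so $x = 0$. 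Hence $T$ is a linear isomorphism, and it is the inverse of the stated map $\sum_i a_i 1_{A_i} \mapsto \sum_i a_i \otimes A_i$.
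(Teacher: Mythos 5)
Your proposal is correct and, for the first two assertions, follows essentially the same route as the paper: the splitting identity is obtained exactly as in the paper by applying the orthomodular law to $A\wedge B\leq A$ and $A\wedge B\leq B$ and then invoking the relators defining $\Delta$ (your extra observation that the relevant relator is $(e_{A'}+e_{C})-(e_{A}+e_{\bf 0})$ with $A'\wedge C={\bf 0}$ is a welcome clarification), and the disjoint representation is obtained by recursive refinement. The paper dispatches that refinement in one sentence; you honestly flag the bookkeeping as the delicate point, and your sketch (process terms one at a time, split each new label against the current pairwise disjoint family, use that meets are monotone so sub-elements of disjoint elements stay disjoint, and note termination after finitely many relator applications) is a legitimate way to fill it in. For the third assertion you diverge from the paper: the paper appeals to a maximal linearly independent system $\{1_{A_\beta}\}_\beta\subset\mathcal{S}$, whereas you construct the evaluation map $T\colon X\to\mathcal{S}$, $a\otimes A\mapsto a1_A$, check that it kills the relators (so it factors through the quotient by $\Delta$), and prove injectivity from the disjoint representation. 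Your route buys a cleaner verification that the stated map $\sum_i a_i1_{A_i}\mapsto\sum_i a_i\otimes A_i$ is itself well defined on $\mathcal{S}$ (it is exhibited as $T^{-1}$), a point the paper's argument leaves implicit; the paper's route avoids having to prove injectivity directly. Both arguments are sound.
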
  
\begin{proof}
Note that the orthomodularity condition yields
\[A = (A \wedge (A\wedge B)^\bot ) \vee (A \wedge B),\quad B = (B \wedge (A\wedge B)^\bot )\vee (A \wedge B).\]
It follows from the construction of $X$ that 
\[a \otimes A = a \otimes (A \wedge (A\wedge B)^\bot ) + a \otimes (A \wedge B),\]
\[b \otimes B = b \otimes (B \wedge (A\wedge B)^\bot ) + b \otimes (A\wedge B).\]
It follows from the construction of $X$ that 
\[a \otimes (A \wedge B) +  b \otimes (A\wedge B) =  (a+b) \otimes (A\wedge B)\]
and the first part of the claim follows.

The second part of the statement thus follows by constructing a refined system of 
elements $A_i$ where one uses recursively the above type decompositions. 
The last part of the statement is then easy to see by studying a maximal linearly independent system
$\{1_{A_\beta}\}_\beta \subset \mathcal{S}$.
\end{proof}

Let $\sqsubseteq$ be the preorder on $X$ generated by the following conditions:
\begin{enumerate}
\item[(i)] $\sqsubseteq$ satisfies the axioms of a partial order of a vector lattice, except possibly anti-symmetry; 
\item[(ii)] $a \otimes A \sqsubseteq b \otimes A$ whenever $a\leq b$; 
\item[(iii)] $1 \otimes A \sqsubseteq  1 \otimes B$ whenever $A\leq B$ in the intrinsic partial order of $\mathcal{L}$.
\end{enumerate} 
 
We define a semi-norm on $X$ by 
\[\left\|\sum_i a_i \otimes A_i \right\|= \inf \left\{\left(\sum_k |b_k |^p \varphi(B_k )\right)^{\frac{1}{p}} \colon \pm\sum_i a_i \otimes A_i \sqsubseteq  \sum_k |b_k |  \otimes B_k \right\}.\]
Indeed, it is easy to see that this is a semi-norm; the triangle inequality follows from the 
condition that $x+y\sqsubseteq v +w$ whenever $x\sqsubseteq v$ and $y\sqsubseteq w$.

We let $\mathcal{X}=X/ \ker \|\cdot\|$ and we still denote by $\|\cdot\|$ the obvious norm induced 
by the above semi-norm on this quotient space. We also take the quotient of the elements $a\otimes A$
and continue using the same notation. Dividing by the kernel corresponds to the operation of identification of functions 
that coincide a.e. in the classical setting. However, we do not know if there is an additional construction-specific necessity for performing this operation.

We let $L^p (\mathcal{L} ,\varphi)$ be the Banach space obtained as the completion of the normed space 
$(\mathcal{X}, \|\cdot\|)$. We define a relation $\leq$ on $L^p (\mathcal{L} ,\varphi)$ by 
means of the closure of the positive  cone given by the $\sqsubseteq$ relation.

\begin{proposition}
The space $L^p (\mathcal{L} ,\varphi)$ endowed with the order $\leq$ is an ordered vector space with the 
property that if $0\leq x \leq y$ then $\|x\|\leq \|y\|$. 
\end{proposition}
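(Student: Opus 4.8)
The plan is to verify directly that $\leq$ is a vector space order and that the norm is monotone on the positive cone. Since $\leq$ is defined as the relation whose positive cone is the norm-closure of the cone $C = \{x \in L^p(\mathcal{L},\varphi) : 0 \sqsubseteq x\}$, the first task is to check that $\overline{C}$ is a genuine cone: it is closed under addition and nonnegative scalar multiplication because $C$ is (by axiom (i) for $\sqsubseteq$) and these operations are continuous for $\|\cdot\|$, so they pass to the closure. The one point requiring care is that $\overline{C} \cap (-\overline{C}) = \{0\}$, which is exactly anti-symmetry of $\leq$; this is where dividing by $\ker\|\cdot\|$ is used, but I would still need to argue that no nonzero vector can be approximated both by elements $\sqsupseteq 0$ and by elements $\sqsubseteq 0$. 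The natural route is to first establish the monotonicity estimate on $\mathcal{X}$ and then leverage it: if $0 \sqsubseteq x$ then from the definition of the semi-norm one gets nothing for free, so instead I would show the implication $u \sqsubseteq v \Rightarrow \|u\| \leq \|v\|$ for $u,v$ with $0 \sqsubseteq u$, using that $\pm u \sqsubseteq \pm v$ is not quite what we have — rather, $0 \sqsubseteq u \sqsubseteq v$ gives $-v \sqsubseteq -u \sqsubseteq u \sqsubseteq v$, hence any $\sum_k |b_k| \otimes B_k$ dominating $\pm v$ also dominates $\pm u$, so the infimum defining $\|u\|$ is over a larger set and $\|u\| \leq \|v\|$.

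Having the estimate $0 \sqsubseteq u \sqsubseteq v \Rightarrow \|u\| \leq \|v\|$ on the dense subspace $\mathcal{X}$, the next step is to transfer both the estimate and the algebraic axioms to the completion $L^p(\mathcal{L},\varphi)$. For the order axioms (transitivity, compatibility with addition, compatibility with nonnegative scaling) I would note these are closed conditions: each says a certain continuous affine combination of points lies in the closed set $\overline{C}$, so they survive passing to limits provided they hold on the dense part, which is immediate from axiom (i) for $\sqsubseteq$. Anti-symmetry $\overline{C} \cap (-\overline{C}) = \{0\}$ I would derive from the monotonicity estimate: if $x \in \overline{C} \cap (-\overline{C})$, approximate $x$ by $u_n \in C$ and $-x$ by $w_n \in C$; then $u_n + w_n \in C$ approximates $0$, and one wants to conclude $\|x\| = 0$. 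This is the subtle point, since monotonicity alone bounds $\|u\|$ by $\|v\|$ only when $u \sqsubseteq v$, and here we only know $u_n, w_n \sqsubseteq$ something small in norm but not that $x$ itself is squeezed. I expect to need the sharper observation that $0 \sqsubseteq u_n$ and $0 \sqsubseteq w_n$ with $\|u_n + w_n\| \to 0$ forces $\|u_n\| \to 0$: indeed $0 \sqsubseteq u_n \sqsubseteq u_n + w_n$, so $\|u_n\| \leq \|u_n + w_n\| \to 0$, whence $x = \lim u_n = 0$.

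For the final monotonicity statement on $L^p(\mathcal{L},\varphi)$ itself — if $0 \leq x \leq y$ then $\|x\| \leq \|y\|$ — I would argue by approximation: $0 \leq x \leq y$ means $x \in \overline{C}$ and $y - x \in \overline{C}$, so choose $u_n \in C$ with $u_n \to x$ and $v_n \in C$ with $v_n \to y - x$; then $u_n + v_n \in C$ and $u_n + v_n \to y$, while $0 \sqsubseteq u_n \sqsubseteq u_n + v_n$ gives $\|u_n\| \leq \|u_n + v_n\|$; letting $n \to \infty$ and using continuity of the norm yields $\|x\| \leq \|y\|$.

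\textbf{Main obstacle.} The genuinely delicate part is anti-symmetry, i.e. that the closure $\overline{C}$ of the $\sqsubseteq$-positive cone is a proper cone. Everything else (the algebraic axioms, the norm estimate) is a soft closure-and-density argument once one has the basic inequality $0 \sqsubseteq u \sqsubseteq v \Rightarrow \|u\| \leq \|v\|$ on $\mathcal{X}$; the crux is to make sure the passage from $X$ to the quotient $\mathcal{X}$ and then to the completion does not collapse the order, and the key lemma driving this is precisely that squeezing $0 \sqsubseteq u \sqsubseteq w$ controls $\|u\|$ by $\|w\|$, which I would isolate and prove first directly from the definition of the semi-norm.
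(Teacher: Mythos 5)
Your proposal is correct and follows essentially the same route as the paper: prove the monotonicity estimate from the definition of the semi-norm, transfer it to the completion by approximating elements of the closed positive cone, and then deduce saliency (anti-symmetry) from $0 \leq \pm z \leq 0 \Rightarrow \|z\| = 0 \Rightarrow z = 0$ in the completion. Your explicitly isolated lemma $0 \sqsubseteq u \sqsubseteq v \Rightarrow \|u\| \leq \|v\|$ on $\mathcal{X}$ simply supplies the detail that the paper compresses into the phrase ``by the definition of the norms.''
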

\begin{proof}
First note that the relation $\sqsubseteq$ is given by a convex cone, since we imposed that 
$\sqsubseteq$ satisfies the vector lattice conditions. Typically there exist elements $x \in X$, $x\neq 0$, 
such that $\pm x \sqsubseteq 0$.

If $x \leq y$, then by the definition of the $\leq$ order, $y-x \in C_+$ where the convex cone $C_+$ is the closure 
\[C_+ :=\overline{\{x\in \mathcal{X}\colon 0 \sqsubseteq x\}} \subset L^p (\mathcal{L} ,\varphi) .\]
It is clear that the relation $\leq$ is a vector lattice order if $\leq$ is an anti-symmetric relation. 
This is the case exactly when the positive cone $C_+$ is salient. 

Note that in any case for $0\leq x \leq y$ it holds that $0 \leq y-x$ and therefore there is 
a sequence $(z_n )$ in the positive cone of $\sqsubseteq$ such that $\| z_n  -(y-x)\|_{ L^p (\mathcal{L} ,\varphi)} \to 0$ as $n\to\infty$.  By the definition of the norms we have that $\|y\| \geq \|x\|$. 

To verify the saliency of $C_+$, assume that $\pm z \in C_+$. Then $0 \leq \pm z \leq 0$. Thus 
$\|z\|_{L^p (\mathcal{L} ,\varphi)}=0$ by the previous observation. Since we defined $\|\cdot \|_{L^p (\mathcal{L} ,\varphi)}$ as a completion of the norm of $\mathcal{X}$ (instead of working with $X$), we obtain that $z=0$ which shows the saliency of $C_+$.
\end{proof}

\subsection{Examples} Here finite sequence spaces are considered with coordinate-wise order.
Below we will write \emph{all} order relations explicitly visible.
If $\mathcal{L}=\{{\bf 0},A,B,{\bf 1}\}$ with ${\bf 0} \leq A,B \leq {\bf 1}$ (thus $A\wedge B = {\bf 0}$, 
$A\vee B = {\bf 1}$) and $\varphi(A)=\varphi(B)=\frac{1}{2}$, then $L^p (\mathcal{L},\varphi)$ is order-isomorphically isometric to $\ell^p (2)$ for $1\leq p <\infty$ (the $2$-dimensional $\ell^p$ space). 

If $\mathcal{L}=\{{\bf 0},A,B,C,{\bf 1}\}$ with ${\bf 0} \leq A,B,C \leq {\bf 1}$ (i.e. smallest non-distributive but modular lattice) and $\varphi(A)=\varphi(B)=\varphi(C)=\frac{1}{2}$, then $L^p (\mathcal{L},\varphi)$ is $1$-dimensional. Indeed, write pairwise distinct $i,j,k \in \{1,2,3\}$ and $D_1 = A$, $D_2 = B$, $D_3 =C$, then 
we have
\begin{multline*}
1\otimes D_i - 1\otimes D_j = (1\otimes D_k + 1\otimes D_i) - (1\otimes D_k + 1\otimes D_j)\\
=(1\otimes {\bf 1} - 1\otimes  {\bf 0}) - (1\otimes {\bf 1} - 1\otimes  {\bf 0})\\ 
= 1\otimes {\bf 1} - 1\otimes {\bf 1} = 0 \otimes {\bf 1} =0.
\end{multline*}
Thus $1\otimes D_i = 1\otimes D_j = 1 \otimes A$. Hence
\[1\otimes {\bf 1} = 1\otimes D_i + 1\otimes D_j = 1\otimes A + 1\otimes A = 2 \otimes A.\]
This means that the space is spanned by $1\otimes A$.

If $\mathcal{L}=\{{\bf 0},A,B,C,{\bf 1}\}$ with ${\bf 0} \leq A \leq {\bf 1}$, ${\bf 0}\leq B \leq C \leq {\bf 1}$ (i.e. smallest non-modular lattice) and $0< \varphi(A)$, $0<\varphi(B)\leq \varphi(C)$, then $L^p (\mathcal{L},\varphi)$ is isometric to $\ell^p (2)$. The linear 
isometry is given by 
\[1\otimes A \mapsto \phi(A)^{\frac{1}{p}}\ e_1 ,\quad 1\otimes B \mapsto \phi(B)^{\frac{1}{p}}\ e_2 .\]
Indeed, 
\[1 \otimes B - 1 \otimes C = (1 \otimes A + 1\otimes B)- (1\otimes A + 1\otimes C)=1\otimes {\bf 1}
-1\otimes {\bf 1} =0,\]
so that the linear space reduces to the first example. In the definition of the norm we clearly choose 
$B$ instead of $C$, since $\varphi(B)\leq \varphi(C)$.

If $(\Sigma/_\sim , m)$ is the measure algebra on the unit interval, then 
$L^p (\mathcal{L},\varphi)$ with $\mathcal{L}=\Sigma/_\sim$, $\varphi=m$, is order-isomorphically isometric to $L^p$. The isomorphism is given by the 
extension of 
\[\sum_i a_i \otimes [A_{i}]_\sim  \quad \longmapsto \quad \left[\sum_i a_i 1_{A_{i}} \right]_{\stackrel{\mathrm{a.e.}}{=}} .\] 

\subsection{Complementation and function spaces}
Suppose that $\mathcal{L}$ is additionally an orthomodular lattice. 
Then for each $M\in \mathcal{L}$ we may define norm-$1$ linear projections $P_M , Q_M \colon L^p (\mathcal{L} ,\varphi) \to L^p (\mathcal{L} ,\varphi)$ as follows. 
First we choose a Hamel basis $\{1\otimes N_\alpha \}_\alpha$ with $N_\alpha \leq M$ or 
$N_\alpha \leq M^\bot$ of $X$  (i.e. a maximal linearly independent family) and then we study operators $X \to X$ as follows : 
\[P_M \colon \sum_i a_i \otimes N_{\alpha_i} \mapsto \sum_i a_i \otimes (M\wedge N_{\alpha_i}) ,\] 
\[Q_M \colon  \sum_i a_i \otimes N_{\alpha_i} \mapsto \sum_i a_i \otimes (M^\bot \wedge N_{\alpha_i}) .\] 
By the orthomodularity of $\mathcal{L}$ we get $(M\wedge N_{\alpha_i}) \wedge (M^\bot \wedge N_{\alpha_i}) =0$ and 
$(M\wedge N_{\alpha_i}) \vee (M^\bot \wedge N_{\alpha_i}) = N_{\alpha_i}$ above. Thus, by the construction 
of $X$ we get 
\[P_M Q_M = Q_M P_M \colon X \to \{0\},\quad P_M + Q_M = Id.\]
Note that the preorder can be alternatively generated by replacing the sets $A$ and $B$ with sets of the
type $A \wedge M$, $B \wedge M$, $A \wedge M^\bot$ and $B \wedge M^\bot$. Since $P_M$ 
and $Q_M$ are linear and preserve the conditions $1 \otimes A \sqsubseteq 1 \otimes B$ and 
$a \otimes A \sqsubseteq b \otimes A$, we can see that $x \sqsubseteq y$, $x,y \in X$, 
if and only if $P_M (x) \sqsubseteq P_M (y)$ and $Q_M (x) \sqsubseteq Q_M (y)$.

Thus, applying $P_M$ and $Q_M$ to the condition  
$\pm\sum_i a_i \otimes A_i \sqsubseteq  \sum_k |b_k |  \otimes B_k $, appearing in the definition of the 
semi-norm, we get that both $P_M$ and $Q_M$ are bicontractive projections. Thus they extend as 
bicontractive projections on $L^p (\mathcal{L} ,\varphi)$.
Moreover, if $\varphi(A \wedge M)+\varphi(A \wedge M^\bot )=\varphi(A)$ should hold for all $A \in \mathcal{L}$, then
\begin{equation}\label{eq:  psum}
\|x\|_{L^p (\mathcal{L} ,\varphi)}^p =\|P_M x\|_{L^p (\mathcal{L} ,\varphi)}^p +\|Q_M x\|_{L^p (\mathcal{L} ,\varphi)}^p ,\quad 1\leq p<\infty ,\ x\in L^p (\mathcal{L} ,\varphi) .
\end{equation}

\begin{theorem}
Let $1\leq p <\infty$, $\mathcal{L}$ be an orthomodular lattice with an order-preserving map 
$\varphi \colon \mathcal{L} \to [0,1]$, as above. Let $(\Omega, \Sigma, \mu)$ be a probability space and 
$\Sigma_0 \subset \Sigma$ a Boolean algebra which $\sigma$-generates $\Sigma$. 
Let us assume that $\varphi (M \vee N) = \varphi (M)+ \varphi (N)$ whenever $N \leq M^\bot$.
Suppose that there is an order-embedding $\j \colon \Sigma_0 \to \mathcal{L}$ such that $\mu(M)=\varphi(\j M)$ for all $M \in \Sigma_0$. (We are not assuming here that $\j$ respects the orthocomplementation operation.) Then 
\[\sum_i a_i 1_{A_i} \mapsto \sum_i a_i \otimes \j (A_i ) ,\quad A_i \in \Sigma_0\]
extends to a linear isometry $L^p (\Omega,\Sigma, \mu) \to L^p (\mathcal{L} ,\varphi)$.
\end{theorem}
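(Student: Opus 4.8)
The plan is to check the map on the dense subspace of $\Sigma_0$-simple functions and then extend by continuity. Write $\mathcal{S}_0 \subset L^p(\Omega,\Sigma,\mu)$ for the linear span of $\{1_A : A \in \Sigma_0\}$. Since $\Sigma_0$ is a Boolean algebra, every $f \in \mathcal{S}_0$ has a representation $f = \sum_j c_j 1_{B_j}$ with the $B_j \in \Sigma_0$ pairwise disjoint, and then $\|f\|_{L^p(\mu)}^p = \sum_j |c_j|^p \mu(B_j)$; moreover, because $\Sigma_0$ $\sigma$-generates $\Sigma$ and $\mu$ is a probability measure, $\mathcal{S}_0$ is dense in $L^p(\Omega,\Sigma,\mu)$ (a standard monotone-class / martingale convergence argument, as already used in the proof of Theorem~\ref{thm: 1}). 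So it suffices to show that $1_A \mapsto 1 \otimes \j(A)$ induces a well-defined linear map $T \colon \mathcal{S}_0 \to L^p(\mathcal{L},\varphi)$ and that $\|Tf\|_{L^p(\mathcal{L},\varphi)} = \|f\|_{L^p(\mu)}$ for $f \in \mathcal{S}_0$; the extension to an isometry $L^p(\Omega,\Sigma,\mu) \to L^p(\mathcal{L},\varphi)$ is then automatic.

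First I would record the following consequence of orthomodularity and of the hypothesis on $\varphi$: if $a \leq b$ in $\mathcal{L}$ and $\varphi(a)=\varphi(b)$, then $1 \otimes a = 1 \otimes b$ in $L^p(\mathcal{L},\varphi)$. Indeed, by the orthomodular law $b = a \vee (b \wedge a^\bot)$ with $a \wedge (b \wedge a^\bot) = {\bf 0}$, so $1 \otimes b = 1 \otimes a + 1 \otimes (b \wedge a^\bot)$ in $X$; since $b \wedge a^\bot \leq a^\bot$, the additivity hypothesis gives $\varphi(b \wedge a^\bot) = \varphi(b)-\varphi(a) = 0$, whence $\|1 \otimes (b \wedge a^\bot)\| \leq \varphi(b \wedge a^\bot)^{1/p} = 0$. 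Next I would use this to show $T$ respects the inclusion--exclusion relations among indicators. Since $\mu = \varphi \circ \j$ and $\mu$ is additive, for disjoint $A,B \in \Sigma_0$ one has $\varphi(\j A)+\varphi(\j B) = \varphi(\j(A \cup B))$; together with $\j A \vee \j B \leq \j(A \cup B)$, $\j(A \cap B) \leq \j A \wedge \j B$, and the monotonicity and subadditivity of $\varphi$, this should pin down $\varphi(\j A \vee \j B) = \varphi(\j(A \cup B))$ and $\varphi(\j A \wedge \j B) = \varphi(\j(A \cap B))$ --- first for disjoint $A,B$, then in general by passing to the (at most four) atoms of the finite subalgebra generated by $A$ and $B$. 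Combined with the identification noted above, this turns the defining relation $1 \otimes \j A + 1 \otimes \j B = 1 \otimes (\j A \vee \j B) + 1 \otimes (\j A \wedge \j B)$ of $X$ into $1 \otimes \j A + 1 \otimes \j B = 1 \otimes \j(A \cup B) + 1 \otimes \j(A \cap B)$ in $L^p(\mathcal{L},\varphi)$, which is exactly the relation needed for $T$ to be well defined and linear; in particular, using the disjoint representations (cf. the Proposition above, asserting that every element of $X$ admits a representation with pairwise disjoint factors), $Tf = \sum_j c_j \otimes \j(B_j)$ whenever $f = \sum_j c_j 1_{B_j}$ with the $B_j$ disjoint.

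For the norm identity, the inequality $\|Tf\|_{L^p(\mathcal{L},\varphi)}^p \leq \sum_j |c_j|^p \varphi(\j B_j) = \|f\|_{L^p(\mu)}^p$ is immediate from condition~(ii) in the definition of $\sqsubseteq$: indeed $\pm Tf = \sum_j (\pm c_j) \otimes \j B_j \sqsubseteq \sum_j |c_j| \otimes \j B_j$. For the reverse inequality I would use the bicontractive projections $P_M, Q_M$ of the previous subsection along the chain $M_k = \j(B_1 \cup \cdots \cup B_k)$: since $\varphi(\j A \wedge \j B) = \varphi(\j(A \cap B)) = 0$ for disjoint $A,B$, the ``off-diagonal'' terms $1 \otimes (\j B_j \wedge M_k)$ that arise are $\varphi$-null and hence zero in $L^p(\mathcal{L},\varphi)$, while the orthogonal-additivity of $\varphi$ furnishes the Pythagorean splitting $\|\cdot\|^p = \|P_{M_k}\cdot\|^p + \|Q_{M_k}\cdot\|^p$ at each stage, cf.~\eqref{eq:  psum}; iterating isolates the individual summands and yields $\|Tf\|_{L^p(\mathcal{L},\varphi)}^p = \sum_j |c_j|^p \varphi(\j B_j) = \|f\|_{L^p(\mu)}^p$. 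Finally, being a linear isometry on the dense subspace $\mathcal{S}_0$, $T$ extends uniquely to a linear isometry $L^p(\Omega,\Sigma,\mu) \to L^p(\mathcal{L},\varphi)$.

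The step I expect to be the real obstacle is the one pinning down $\varphi(\j A \wedge \j B)$ and $\varphi(\j A \vee \j B)$ --- equivalently, showing that $\j(\Sigma_0)$ behaves, up to $\varphi$-null perturbations, like a genuine algebra of ``measurable sets'' inside $(\mathcal{L},\varphi)$. Monotonicity and subadditivity of $\varphi$ alone will not suffice; one has to genuinely use the orthomodular law to split lattice elements into orthogonal parts and then invoke the exactness hypothesis $\varphi(M \vee N) = \varphi(M)+\varphi(N)$ for $N \leq M^\bot$, propagating the modularity of $\varphi \circ \j$ (which comes for free from $\Sigma_0$ being Boolean) to the ambient lattice operations on the image. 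Once this is in hand, the well-definedness, the two-sided norm estimate, and the extension by density are all routine.
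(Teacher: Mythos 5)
Your outline (well-definedness on $\Sigma_0$-simple functions, a two-sided norm estimate, then extension by density) matches the paper's, but the two steps on which everything hinges do not go through as written. The main gap is the one you yourself flag: the identities $\varphi(\jmath A\vee\jmath B)=\varphi(\jmath(A\cup B))$ and $\varphi(\jmath A\wedge\jmath B)=\varphi(\jmath(A\cap B))$ are not derivable from the hypotheses. Every tool you have points in one direction only: monotonicity of $\varphi$ and the order-embedding give $\varphi(\jmath A\vee\jmath B)\leq\varphi(\jmath(A\cup B))$ and $\varphi(\jmath A\wedge\jmath B)\geq\varphi(\jmath(A\cap B))$, subadditivity reproduces the first bound, and the orthomodular/orthogonal-additivity splitting you propose only yields $\varphi(\jmath A\vee\jmath B)=\varphi(\jmath A)+\varphi\bigl((\jmath A)^\bot\wedge(\jmath A\vee\jmath B)\bigr)$, whose second term you cannot bound from below --- there is no superadditivity or modularity of $\varphi$ anywhere in the hypotheses, and for non-commuting elements of an orthomodular lattice such modularity genuinely fails (think of a vector state on a projection lattice). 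So the relation $1\otimes\jmath A+1\otimes\jmath B=1\otimes\jmath(A\cup B)+1\otimes\jmath(A\cap B)$, which you need both for well-definedness and for killing the ``off-diagonal'' terms in your lower bound, remains unproved. The paper avoids these particular identities altogether: it takes an $\varepsilon$-optimal dominating element $\sum_k|b_k|\otimes B_k$, produces a \emph{simultaneous} pairwise-orthogonal refinement $M_1,\dots,M_k$ of all the elements $\jmath(A_i)$ \emph{and} $B_j$ at once, compares coefficients via the projections $P_{M_\ell}$, and only then uses additivity on orthogonal joins, where $\varphi$ really is additive.

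The second gap is your use of the Pythagorean splitting $\|x\|^p=\|P_Mx\|^p+\|Q_Mx\|^p$. In the paper that identity is stated only under the extra hypothesis $\varphi(A\wedge M)+\varphi(A\wedge M^\bot)=\varphi(A)$ for \emph{all} $A\in\mathcal{L}$, which is strictly stronger than the theorem's assumption of additivity on orthogonal joins: orthogonal additivity gives $\varphi\bigl((A\wedge M)\vee(A\wedge M^\bot)\bigr)=\varphi(A\wedge M)+\varphi(A\wedge M^\bot)$, but $(A\wedge M)\vee(A\wedge M^\bot)=A$ only when $A$ commutes with $M$. You are therefore importing a hypothesis the theorem does not grant. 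Relatedly, your lower-bound argument never engages with the actual content of the norm's definition: the dominating elements $\sum_k|b_k|\otimes B_k$ range over arbitrary $B_k\in\mathcal{L}$, not over $\jmath(\Sigma_0)$, and your chain $M_k=\jmath(B_1\cup\cdots\cup B_k)$ only sees the image of the Boolean algebra. Even granting the splitting, you would still need $\|1\otimes\jmath B_j\|^p\geq\varphi(\jmath B_j)$, i.e.\ $\varphi^*=\varphi$ on $\jmath(\Sigma_0)$, which is exactly the kind of lower estimate against arbitrary dominating elements that the paper's orthogonal-refinement-plus-projection argument is designed to supply. Your reduction to the dense subspace, the little lemma that $a\leq b$ with $\varphi(a)=\varphi(b)$ forces $1\otimes a=1\otimes b$, and the upper bound $\|Tf\|^p\leq\sum_j|c_j|^p\varphi(\jmath B_j)$ are all correct and cleanly argued; it is the two points above that need the machinery of the paper's proof (or some substitute for it).
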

\begin{proof}
First, by considering the order-embedding and the definition of $X$ compatible with operations of simple functions on the Boolean algebra we can show recursively that the values on the right side remain invariant in passing to a form with $A_i$:s pairwise $\mu$-almost disjoint.  
Thus it is easy to see from the construction of $X$ that the above linear map is in fact well defined.

Note that the special simple functions of the above form, $\sum_i a_i 1_{A_i} \in 
L^p (\Omega,\Sigma, \mu) $, are dense in the set of all simple functions, since $\Sigma_0$ $\sigma$-generates 
$\Sigma$. Consequently, these special simple functions are dense in $L^p (\Omega,\Sigma, \mu) $. 
On the other hand, since $L^p (\mathcal{L} ,\varphi)$ is complete by the construction, we are only required 
to verify that the linear mapping appearing in the statement is norm-preserving.

Thus, pick an element $\sum_i a_i \otimes \j (A_i )$ in $X$. Let $\varepsilon>0$.
Let $\sum_j |b_j|  \otimes B_j \in X$ such that
\begin{equation}\label{eq: pm}
\pm \sum_i a_i \otimes \j (A_i ) \sqsubseteq \sum_j |b_j|  \otimes B_j 
\end{equation}
and
\[\left\| \sum_i a_i \otimes \j (A_i )\right\|^p \geq \sum_j |b_j|^p  \varphi( B_j )  -\varepsilon .\]

By the beginning remarks of the proof we may assume without loss of generality that $A_i \wedge A_j =0$ 
above for $i\neq j$. Actually, we may refine the above representations of elements of $X$.
Namely, by using the orthomodular law recursively we obtain that there are $M_1 , \ldots , M_k \in \mathcal{L}$
with $M_j \leq M_{i}^\bot$ for $i\neq j$  such that for each 
$K \in \{\jmath(A_i ), B_j\colon i,j\}$ there are $\ell_1 , \ell_2 \ldots , \ell_l \in \{1,\ldots,k\}$ such that
$K = \bigvee_{i=1}^{l} M_{\ell_i}$.

Let $Y \subset X$ be the $k$-dimensional subspace of $X$ spanned by the elements $1\otimes M_i$.
We may write \eqref{eq: pm} in the form
\[\pm \sum_\ell c_\ell \otimes M_\ell \sqsubseteq \sum_\ell d_\ell \otimes M_\ell .\]
Let $P_{M_\ell} \colon Y \to [1\otimes M_\ell ]$ be the projection defined above. Since 
$P_{M_\ell}$ is linear and order-preserving, it follows that $|c_\ell | \leq d_\ell$. 
Because of the choice of $M_\ell$:s and the assumption on $\varphi$ we observe that 
\[\left\|\sum_\ell d_\ell \otimes M_\ell \right\| = \left( \sum_\ell d_{\ell}^p \varphi(M_\ell )\right)^{\frac{1}{p}}.\]
Clearly, putting $d_\ell = |c_\ell |$ is the optimal choice of the coefficients for this choice of 
$(M_\ell )$. We obtain 
\[\left\| \sum_i a_i \otimes \j (A_i )\right\|^p \leq \sum_\ell |c_\ell |^p \varphi(M_\ell ) 
=\sum_i |a_i |^p \varphi(\jmath A_i ) \\ =\sum_i |a_i |^p \mu(A_i ) \]
where 
\[\sum_\ell |c_\ell |^p \varphi(M_\ell ) \leq \sum_j |b_j|^p  \varphi( B_j ) .\]
Since $\varepsilon>0$ was arbitrary it follows that 
\[\left\|\sum_i a_i \otimes \j (A_i )\right\| = \left( \sum_i |a_i |^p \mu (A_i)\right)^{\frac{1}{p}}.\]
Thus the investigated linear map is norm-preserving.
\end{proof}

As $(\mathcal{L} ,\varphi)$ may contain such measure algebras in different dispositions, we conclude that 
the corresponding $L^p (\mathcal{L} ,\varphi)$ space is rich in a sense.

\subsection{Disposition of lattices and submeasures}
We note that 
\[\varphi^* (A) := \|1\otimes A\|_{L^1 (\mathcal{L}, \varphi)},\quad A \in \mathcal{L},\]
defines an order-preserving mapping $\varphi^* \colon  \mathcal{L} \to [0,1]$ with $\varphi^* \leq \varphi$,
$\varphi^* ({\mathbf 0})=0$, $\varphi^* ({\mathbf 1})=1$ and 
\[\varphi^* (A \vee B) \leq \varphi^* (A ) +  \varphi^* (B)\]
even if $\varphi$ fails to satisfy the corresponding property, or fails to be order-preserving. Also, if $\mathcal{L}$ is an orthomodular lattice then 
\[\varphi^* (M \vee N) = \varphi^* (M) +  \varphi^* (N)\]
if $N\leq M^\bot$. 
Moreover, it holds that $\|x\|_{L^p (\mathcal{L},\varphi^* )}=\|x\|_{L^p (\mathcal{L},\varphi)}$ for \emph{any} 
mapping $\varphi \colon \mathcal{L} \to [0,1]$ and any $1\leq p < \infty$.
Therefore, is seems reasonable to assume that $\varphi$ satisfies the above properties 
of $\varphi^*$ in the first place.

The finite examples of lattices corresponded to $\ell^p (n)$ spaces and it seems natural to ask 
whether Banach spaces of the type $L^p (\mathcal{L} ,\varphi)$ can be represented in terms of 
classical $L^p (\mu)$ spaces (e.g. as subspaces or quotients). This appears an interesting problem for future
research. 

This calls for the following definition. Suppose that $(\mathfrak{A},\mu)$ is a probability measure algebra and 
$(\mathcal{L},\varphi )$ is a lattice endowed with a submeasure. Then we say that 
$\mathfrak{A}$ is an \emph{algebrification} of $\mathcal{L}$ if one can form the following commuting 
diagram:

\begin{center}
$\begin{CD}
\mathcal{L} 	@>h>>	\mathfrak{A} \\
@VV{\otimes}V		@VV{\otimes\phantom{\ \ \ }}V\\
L^p (\mathcal{L},\varphi )	@>T>> 	L^p (\mathfrak{A},\mu)  .	
\end{CD}$
\end{center}
where $h$ is a lattice homomorphism ($\mathfrak{A}$ being understood as a lattice in the obvious way), 
$\otimes$ denotes the mapping $A \mapsto 1\otimes A$ appearing in the construction of $L^p (\mathcal{L},\varphi )$ spaces and $T$ is an isometric isomorphism between Banach spaces. The definition is not purely algebraic since
it is affected by the (sub)measures of sets (e.g. $h$ necessarily maps $\varphi$-null  elements to the
minimal element of $\mathfrak{A}$). This definition also relies on the facts that the simple
functions are dense in the spaces and that $T$ is an isometry, so that the diagram becomes rather rigid.

\begin{proposition}
Let $(\mathcal{L},\varphi )$ be a lattice endowed with a submeasure and $1\leq p <\infty$, $p\neq 2$. If a corresponding algebrification $\mathfrak{A}$ exists then it is unique.
\end{proposition}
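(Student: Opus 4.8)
Suppose $(\mathfrak{A}_1,\mu_1,h_1,T_1)$ and $(\mathfrak{A}_2,\mu_2,h_2,T_2)$ are two algebrifications of $(\mathcal{L},\varphi)$. The plan is to produce a measure-algebra isomorphism $\sigma\colon\mathfrak{A}_1\to\mathfrak{A}_2$ with $\sigma\circ h_1=h_2$ and with the induced composition isometry $U_\sigma$ satisfying $U_\sigma\circ T_1=T_2$; this is the natural sense in which the algebrification is unique. Throughout I would use that a probability measure algebra is, via its Stone space (cf. \cite[321J]{fremlin}), the measure algebra of an honest probability space, and that under this identification $L^p(\mathfrak{A}_i,\mu_i)$ is the classical $L^p$ of that space with $1\otimes B$ the class of the indicator $1_B$ --- exactly the identification already used in the proof of Theorem \ref{thm: 1} and in the final example of Section 4.

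First I would pass to the transfer operator $S:=T_2\circ T_1^{-1}\colon L^p(\mathfrak{A}_1,\mu_1)\to L^p(\mathfrak{A}_2,\mu_2)$, a surjective linear isometry of classical $L^p$ spaces. Commutativity of the two defining squares gives $T_i(1\otimes A)=1_{h_i(A)}$ for every $A\in\mathcal{L}$, whence $S(1_{h_1(A)})=1_{h_2(A)}$ for all $A$. Since the finite sums $\sum_i a_i\otimes A_i$ are by construction already dense in $L^p(\mathcal{L},\varphi)$ and each $T_i$ is a surjective isometry, the ``special'' simple functions $\sum_i a_i 1_{h_i(A_i)}$ are dense in $L^p(\mathfrak{A}_i,\mu_i)$. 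As $h_i$ is order-preserving and ${\bf 1}$ is the top of $\mathcal{L}$, every $1_{h_i(A)}$ vanishes off the set $h_i({\bf 1})$, and this density then forces $h_i({\bf 1})$ to be the top element of $\mathfrak{A}_i$, so $1_{h_i({\bf 1})}$ is the constant function $1$. The identity $S(1_{h_1(A)})=1_{h_2(A)}$ at $A={\bf 1}$ therefore reads $S(1)=1$: the transfer operator fixes the unit.

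The heart of the argument is the rigidity of $L^p$-isometries for $p\neq 2$. For $f,g$ in any $L^p$ with $1\le p<\infty$, $p\neq 2$, one has $\|f+g\|_p^p+\|f-g\|_p^p=2\|f\|_p^p+2\|g\|_p^p$ if and only if $fg=0$ a.e.; hence the isometry $S$ and its inverse preserve this identity and therefore preserve disjointness of supports (the Banach--Lamperti phenomenon). Now for $B\in\mathfrak{A}_1$ the functions $1_B$ and $1_{\neg B}$ are disjointly supported with sum $1$, so $S1_B$ and $S1_{\neg B}$ are disjointly supported with sum $S(1)=1$, which forces $S1_B=1_{\sigma(B)}$ where $\sigma(B):=\{S1_B\neq 0\}$. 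Additivity of $S$ on disjoint indicators together with disjointness-preservation makes $\sigma\colon\mathfrak{A}_1\to\mathfrak{A}_2$ a Boolean homomorphism, and the same reasoning applied to $S^{-1}$ (which also fixes the unit) shows $\sigma$ is a Boolean isomorphism. Since $S$ is an isometry, $\mu_2(\sigma B)=\|1_{\sigma B}\|_p^p=\|1_B\|_p^p=\mu_1(B)$, so $\sigma$ is an isomorphism of \emph{measure} algebras; it induces the composition isometry $U_\sigma\colon 1_B\mapsto 1_{\sigma B}$, which agrees with $S$ on the dense set of special simple functions and hence equals $S=T_2T_1^{-1}$, i.e.\ $U_\sigma\circ T_1=T_2$. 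Finally $1_{\sigma(h_1(A))}=S1_{h_1(A)}=1_{h_2(A)}$ gives $\sigma\circ h_1=h_2$, so $\sigma$ identifies the two algebrifications.

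I expect the only real obstacle to be invoking the Banach--Lamperti rigidity in the right generality --- the norm-identity characterization of disjoint support, which is precisely where the hypothesis $p\neq 2$ is indispensable (for $p=2$ it is merely the parallelogram law and carries no information, and $L^2$ indeed admits isometries, e.g.\ rotations, not induced by any measure-algebra isomorphism, so the statement genuinely fails there) --- together with the routine but slightly fiddly bookkeeping of passing between the abstract algebras $\mathfrak{A}_i$ and their Stone-space representations without disturbing null sets. The remaining ingredients (density of special simple functions, the unit-fixing step, and upgrading a Boolean isomorphism to a measure-algebra isomorphism) are soft.
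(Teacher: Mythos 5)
Your argument is correct, but it takes a genuinely different (and more self-contained) route than the paper. The paper's proof is a two-line appeal to the abstract theory of $L^p$-structure: by \cite{LPstructure}, for $1\leq p<\infty$, $p\neq 2$, the Boolean algebra of $L^p$-projections of $L^p(\mathfrak{A},\mu)$ is an isometric invariant of the Banach space and is canonically identified with $\mathfrak{A}$ itself, so $\mathfrak{A}$ is recovered from the isometry class of $L^p(\mathcal{L},\varphi)$ and is therefore unique up to isomorphism. You instead reprove the relevant special case of that rigidity by hand: transfer operator $S=T_2T_1^{-1}$, the unit-fixing step, and the Banach--Lamperti characterization of disjointness via the Clarkson-type equality $\|f+g\|_p^p+\|f-g\|_p^p=2\|f\|_p^p+2\|g\|_p^p$, which is exactly where $p\neq 2$ enters in both approaches (it is what makes the $L^p$-projections detectable from the norm). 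What your version buys is concreteness and a strictly stronger conclusion: you produce an explicit measure-algebra isomorphism $\sigma$ and verify the compatibilities $\sigma\circ h_1=h_2$ and $U_\sigma\circ T_1=T_2$, i.e.\ uniqueness of the whole commuting diagram, whereas the paper only asserts that the algebra $\mathfrak{A}$ is determined up to isomorphism. What the paper's citation buys is brevity and generality (the $L^p$-structure machinery handles the identification of projections with elements of $\mathfrak{A}$ once and for all). The only points to keep an eye on in your write-up are routine: the identification of $1\otimes B\in L^p(\mathfrak{A}_i,\mu_i)$ with the indicator $1_B$ via the Stone representation (which the paper itself uses in its final example of Section 4), and the fact that the equality criterion for disjoint supports is being applied to real scalars, which is the setting of the construction.
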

\begin{proof}
The $L^p$-structure of a Banach space is an isometric invariant. In the case with $L^p (\mathfrak{A},\mu)$,
$1\leq p <\infty$, $p\neq 2$, the $L^p$-structure of the space can be identified with $\mathfrak{A}$, see \cite{LPstructure}. Thus $\mathfrak{A}$ is fully determined by $L^p (\mathcal{L},\varphi )$.
\end{proof}

Thus, the most straight-forward representation problem involving $L^p (\mathcal{L},\varphi )$ type spaces
can be formulated as follows: Which submeasured lattices $(\mathcal{L},\varphi )$ admit 
an algebrification?

\subsection*{Acknowledgments}
This research was financially supported by the Academy of Finland Project \#268009, the Finnish Cultural Foundation and V\"{a}is\"{a}l\"{a} Foundation.

\end{document}